\documentclass[11pt]{amsart}

\usepackage{amsmath,amssymb}
\usepackage[margin=3cm]{geometry}
\usepackage{booktabs}
\usepackage{url}

\newtheorem{theorem}{Theorem}
\newtheorem{lemma}[theorem]{Lemma}
\newtheorem{corollary}[theorem]{Corollary}
\newtheorem{proposition}[theorem]{Proposition}
\theoremstyle{remark}

\newcommand{\R}{\mathbb{R}}
\newcommand{\norm}[1]{\lVert #1\rVert}
\newcommand{\abs}[1]{\lvert #1\rvert}

\begin{document}

\title[Kusner's conjecture on equilateral sets]{A counterexample to Kusner's conjecture on equilateral sets}

\author{Logan R. Chalmers}
\address{University of Otago, Dunedin 9016, New Zealand}
\email{logan.chalmers@postgrad.otago.ac.nz}

\subjclass[2020]{Primary 52A21; Secondary 46B20, 52C17, 65G20}
\keywords{Equilateral sets, Kusner's conjecture, $\ell_p$ spaces}

\begin{abstract}
We disprove Kusner's 1983 conjecture that every equilateral set in
$\ell_p^n$ with $2<p<\infty$ has at most $n+1$ points: there exist $58$
points in $\R^{56}$ whose pairwise $\ell_5$ distances are all equal, so
the maximum equilateral-set size satisfies $e(\ell_5^{56})\ge58>57$.
This is the first equilateral set of more than $n+1$ points in $\ell_p^n$
for any finite $p\ge2$. The construction persists on an open interval of
exponents around $5$; since Ge, Xu and Zhou proved the
conjecture for $2\le p\le4$, the infimum of exponents at which it
fails lies in $[4,5)$. The configuration is specified by the unique solution of an
explicit polynomial system with rational coefficients in a rational
box; existence and uniqueness are proved by exact integer inequalities.
\end{abstract}

\maketitle

\section{Introduction}\label{sec:intro}

In correspondence in the early 1980s, Robert Kusner sent Richard Guy
questions about the largest equilateral sets in $\ell_p^n$. Guy
published them in 1983 \cite{Guy1983}, and the assertion
$e(\ell_p^n)=n+1$ for finite $p>2$ became known as Kusner's conjecture.
We disprove it at $p=5$. For $x\in\mathbb R^n$, write
\[
 \norm{x}_p=\left(\sum_{r=1}^n\abs{x_r}^p\right)^{1/p},
\]
and a set is \emph{equilateral} if the distance between every two distinct
points is the same. We write $e(X)$ for the largest size of an equilateral
set in a finite-dimensional normed space $X$.

For $1\le p<\infty$, the standard basis together with a suitable
multiple of $(1,\ldots,1)$ gives $n+1$ equilateral points. The Euclidean
maximum is $n+1$, while the $2^n$ cube vertices are equilateral
in $\ell_\infty^n$. Guy recorded a heuristic for the conjecture: each
new point must lie on the smooth $\ell_p$-spheres centred at the
preceding points, and in general position each additional sphere removes
one degree of freedom. Moreover, $e(X)$ bounds the Borsuk number of $X$
from below and equals the maximum number of pairwise touching
translates of the unit ball \cite{Kalai2015,LemmensParsons2015}.

Swanepoel proved the conjecture at $p=4$ and, for each $1<p<2$,
constructed equilateral sets of at least $(1+\varepsilon_p)n$ points for all
sufficiently large $n$ \cite{Swanepoel2004}. Smyth obtained an effective
interval around $2$ \cite{Smyth2013}, and Swanepoel made explicit an
$n$-dependent interval around $4$ \cite{Swanepoel2014}. Ge, Xu and Zhou
proved the full range $2\le p\le4$ \cite{GeXuZhou2026}. At $p=5$, the
first integer exponent beyond this range, the best known upper bound is
$O(n\log n)$ \cite{AlonPudlak2003,GeXuZhou2026}. No equilateral set of
more than $n+1$ points in $\ell_p^n$ was known for finite $p\ge2$.

\begin{theorem}\label{thm:main}
There is an equilateral set of $58$ points in $\ell_5^{56}$. Consequently
\[
 e(\ell_5^{56})\ge58>57=56+1,
\]
so Kusner's conjecture is false.
\end{theorem}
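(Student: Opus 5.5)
We will exhibit the configuration as a perturbation of a highly symmetric ansatz and certify it by an interval-Newton (Krawczyk) type argument carried out in exact rational arithmetic. First we fix a symmetric template: the $58$ points are split into a few orbits under a permutation group acting on the $56$ coordinates (for instance, a group acting transitively on blocks of coordinates), and within each orbit a point is described by a small number of real parameters giving the values of its coordinates on each block. The pairwise distance conditions $\norm{x_i-x_j}_5^5=1$ collapse under the symmetry to a small system $F(u)=0$ in the unknown parameters $u\in\R^m$. Every polynomial in it has rational coefficients, and we arrange the setup so that the number of distinct equations equals the number of unknowns. Since $p=5$ is odd, $\abs{t}^5$ is a polynomial on each sign region. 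So once the sign pattern of every coordinate difference $x_{i,r}-x_{j,r}$ is fixed, each equation is a genuine polynomial with rational coefficients.

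Second, we find an approximate solution $u_0$ numerically, by Newton's method from many starting points over the choice of template, and verify numerically that the Jacobian $DF(u_0)$ is well conditioned. From this we pick a small rational box $B$ around $u_0$ on which every coordinate difference keeps a fixed, strict sign. The strictness matters: it makes $F$ an honest polynomial map on $B$, and it is also what lets the configuration persist for nearby exponents.

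Third comes the rigorous step. We choose a rational approximate inverse $C\approx DF(u_0)^{-1}$ and consider the Krawczyk map
\[
K(B)=u_0-CF(u_0)+(I-C\,DF(B))(B-u_0).
\]
We bound $F(u_0)$ exactly and bound the entries of $DF$ over $B$ by explicit rational upper bounds on the monomials. Clearing denominators turns the inclusion $K(B)\subset\operatorname{int}B$ into finitely many integer inequalities, which we check exactly. By the Krawczyk theorem, this inclusion gives a unique zero $u^*\in B$. Substituting $u^*$ back produces $58$ points in $\R^{56}$ that are pairwise at $\ell_5$-distance $1$. We must also check that the points are distinct, which holds because every orbit representative has a coordinate difference bounded away from zero on $B$. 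This proves the theorem.

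The persistence on an interval of exponents follows from the implicit function theorem applied in $(u,p)$. Nondegeneracy of $DF$ and strictness of the signs both survive small changes of $p$.

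The main obstacle is the first step: finding a symmetry ansatz with $58$ points in dimension $56$ for which the reduced system is square and has a nondegenerate real solution. Guy's dimension count says this is generically impossible, so the symmetry must create dependencies among the equations. I would first search over block-structured configurations in which the points are the standard-basis-like orbit plus a few "diagonal" points. The block sizes would be chosen so that the equation count drops below the unknown count plus one, and Newton's method would then be run at $p=5$ for each candidate. The certification itself is routine once a good candidate exists.
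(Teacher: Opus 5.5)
\textbf{There is a genuine gap: the proposal never produces a configuration to certify.}

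Your certification machinery is sound and is essentially what the paper does. Fixing the sign of every coordinate difference turns $\abs{t}^5$ into a polynomial. A rational approximate inverse then drives the map $z\mapsto z-BF(z)$. Your Krawczyk inclusion is essentially the paper's condition $\norm{BF(c)}_\infty+q\rho<\rho$, checked by exact integer inequalities. Your extension to $p$ near $5$ via the implicit function theorem also matches.

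The theorem, however, is an existence statement, and you defer the object itself to a search over symmetric templates. You call this ``the main obstacle'', but it is the whole content of the theorem. The paper reports that exactly this route failed: searches over discrete alphabets and over configurations with imposed symmetry produced no candidate. Without a concrete approximate solution, your third step has nothing to act on. In the paper that solution is the integer arrays $K$, $d_K$, \texttt{sel}, $B_0$, together with the checked bounds $f_0<2^{203}$, $\eta_0<2^{247}$, $q_0<2^{226}$, $\eta_0+2^{17}q_0<2^{247}$. So what you have is a research plan, not a proof.

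Your reason for imposing symmetry also misreads the dimension count. Guy's heuristic is sequential: it places each new point against earlier points already fixed in general position. Counted globally, the unrestricted system has $58\cdot 56+1=3249$ unknowns (all coordinates and $D$) and only $1653$ equations. It is therefore underdetermined by $1596$, and no symmetry-induced dependencies among equations are needed. Imposing symmetry can only shrink the solution set you search. The real difficulty is whether any real solution with a consistent strict sign pattern exists.

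The paper settles this numerically and without symmetry. It minimizes the scale-invariant residual $R$ over unrestricted coordinates, first finding $102$ points in dimension $100$. It then repeatedly deletes one point and one coordinate and re-solves, down to $58$ points in $\R^{56}$. The system is made square by freezing $1596$ coordinates at dyadic values, not by symmetry, and solving for the remaining $1652$ coordinates and $D$. That surplus freedom is also why the certified configuration sits in a $1596$-dimensional real-analytic family.
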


\begin{corollary}\label{cor:interval}
There is an $\varepsilon>0$ such that
\[
 e(\ell_p^{56})\ge58
 \qquad\text{whenever}\qquad \abs{p-5}<\varepsilon.
\]
In particular, if
\[
 p_0=\inf\{p>2:e(\ell_p^n)>n+1\text{ for some }n\in\mathbb N\},
\]
then $p_0\in[4,5)$.
\end{corollary}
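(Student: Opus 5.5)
The plan is to deduce the corollary from the construction behind Theorem~\ref{thm:main} by a quantitative implicit function (Krawczyk/Newton--Kantorovich) argument in which the exponent $p$ is treated as a parameter. The abstract says the configuration is the unique solution of an explicit polynomial system in a rational box, with existence and uniqueness certified by exact inequalities. I would first rewrite that system with $p$ free. Parametrize the $58$ points $x^{(1)},\dots,x^{(58)}\in\R^{56}$ by the same unknowns used in the proof of Theorem~\ref{thm:main}, including the common distance $d$ and any symmetry reductions, and take the unknown vector to be $z$. The equations are then
\[
 F(z,p)=\Bigl(\textstyle\sum_{r}\abs{x^{(i)}_r-x^{(j)}_r}^p-d^p\Bigr)_{i<j}=0 .
\]
On the box there is presumably a fixed sign pattern: each coordinate difference stays away from $0$ or is identically zero by the symmetry. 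So $F$ is real-analytic in $(z,p)$ on a neighbourhood of the box times an interval around $5$. At $p=5$ it reduces to the polynomial system of the theorem, with the same number of independent equations as unknowns after the symmetry reduction.

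The key step is to extract nondegeneracy from the existence and uniqueness certificate. If that certificate is a Krawczyk or interval-Newton inclusion $K(B)\subset\operatorname{int}B$, it already shows that the Jacobian $D_zF(\cdot,5)$ is invertible throughout the box $B$. This is the statement I would isolate first: the solution $z_5$ is a regular zero. The implicit function theorem then gives a continuous branch $z(p)$ with $F(z(p),p)=0$ for $\abs{p-5}<\varepsilon$. Since $K(B)\subset\operatorname{int}B$ is an open condition and $F$ depends continuously on $p$, one can even take $\varepsilon$ explicit. For $p$ near $5$, $z(p)$ stays near $z_5$, so the points remain pairwise distinct and $d(p)>0$, and every $\ell_p$ distance equals $d(p)$. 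This gives $e(\ell_p^{56})\ge58$.

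The main obstacle is whether the paper's certificate really establishes regularity rather than only uniqueness. An exact sign-change or degree argument, for example, would not do so directly. In that case I would separately verify that $\det D_zF(z,5)\neq0$ on the box using exact rational interval bounds. Alternatively, I would use a topological degree argument: a nonzero degree of $F(\cdot,5)$ on $\partial B$, with $F\neq0$ on $\partial B$, persists under small perturbation of $p$ by homotopy invariance. That yields existence near $5$ even without a smooth branch.

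For the ``in particular'' statement, note that $p_0$ is an infimum over a set containing $(5-\varepsilon,5+\varepsilon)$, so $p_0\le5-\varepsilon<5$. Conversely, by Ge, Xu and Zhou \cite{GeXuZhou2026}, $e(\ell_p^n)=n+1$ for all $n$ whenever $2\le p\le4$, so no $p\in(2,4]$ belongs to the set. Hence $p_0\ge4$, and $p_0\in[4,5)$.
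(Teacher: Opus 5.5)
Your proposal is correct and follows essentially the same route as the paper. The coordinate gap \eqref{eq:gap} fixes every sign on $X$, so $F(p,z)$ is real-analytic near $(5,z_*)$. The certificate bound $\sup_{z\in X}\norm{I-BJ(z)}_\infty<1$ from Lemma~\ref{lem:contraction} already gives invertibility of $D_zF(5,z_*)$, so your fallback degree or determinant arguments are unnecessary. The implicit function theorem, continuity (to keep $z(p)\in X$ with positive distance coordinate) and the theorem of Ge, Xu and Zhou then give both claims. The only difference is cosmetic: you write the distance term as $d^p$ instead of the paper's independent variable $D$, which at $p=5$ just rescales one Jacobian column by $5d^4\neq0$.
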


Searches over discrete alphabets and configurations with imposed symmetry
produced no candidate. We therefore minimized the following scale-invariant
equidistance residual over unrestricted coordinates. For $m=n+2$ points
$x^{(1)},\ldots,x^{(m)}$, put
\[
 d_{ij}=\sum_{r=1}^n\abs{x_r^{(i)}-x_r^{(j)}}^5.
\]
Writing $\bar d$ for their mean, set
\[
 R=\left(\binom{m}{2}^{-1}\sum_{i<j}
 \left(\frac{d_{ij}}{\bar d}-1\right)^2\right)^{1/2}.
\]
The search produced an approximate configuration of $102$ points in
dimension $100$. We repeatedly removed one point and one coordinate and
re-solved, obtaining the $58$-point configuration in dimension $56$; no
minimality in the dimension is claimed.

The numerical search produced an approximate centre and inverse
Jacobian. For the verification, $1596$ of the $3248$
coordinate entries are fixed at dyadic rational values. The remaining
$1652$ coordinates and the common fifth-power distance form a square
system of $1653$ equations. The coordinate gaps fix every sign throughout
a rational box, so the equations are rational polynomials there. Exact
integer bounds show that $z\mapsto z-BF(z)$ contracts the box. The fixed
point is an exact zero with positive distance coordinate, and hence gives
$58$ distinct equilateral points. From the certificate data,
the archived verification code reconstructs the integers in
Proposition~\ref{prop:certificate} and checks the displayed inequalities
\cite{Certificate}.

\section{The configuration}\label{sec:configuration}

Set $m=58$, $n=56$ and
\[
 P=\binom{m}{2}=1653.
\]
Order the pairs $(i,j)$, $1\le i<j\le m$, lexicographically. The
certificate is specified by four integer arrays in the archive:
\begin{itemize}
\item $K\in\mathbb Z^{58\times56}$, giving centre coordinates
      $c_{ir}=K_{ir}/2^{50}$;
\item \texttt{dK}, a one-entry array whose entry $d_K\in\mathbb Z$
      gives the centre value $D_c=d_K/2^{50}$ of the common fifth-power
      distance;
\item \texttt{sel}, an array of length $1653$: its first $1652$
      entries encode an ordered list $S$ of distinct coordinate
      positions $(i,r)$ as $56(i-1)+(r-1)$, and its final entry is the
      marker $3248$ for the variable $D$; and
\item \texttt{BI} $=B_0\in\mathbb Z^{P\times P}$, giving the rational matrix
      $B=B_0/2^{40}$.
\end{itemize}
The rows of $B_0$ follow the variable order specified by \texttt{sel}, and
its columns follow the lexicographic pair order.

Let $z\in\R^P$. Its first $1652$ entries are the coordinates $x_{ir}$
for $(i,r)\in S$, in the archived order, and its last entry is $D$.
Coordinates not in $S$ are fixed at $K_{ir}/2^{50}$. In this way $z$
determines points $x^{(1)},\ldots,x^{(58)}\in\R^{56}$ and a number $D$.
Define $F:\R^P\to\R^P$ by
\begin{equation}\label{eq:Fabs}
 F_{ij}(z)=\sum_{r=1}^{56}
 \abs{x_r^{(i)}-x_r^{(j)}}^5-D,
 \qquad 1\le i<j\le58,
\end{equation}
with components in lexicographic pair order. A zero of $F$ with $D>0$
gives $58$ distinct points at the common $\ell_5$ distance $D^{1/5}$.

Let $c\in\R^P$ be the vector of selected centre coordinates followed by
$D_c$, let
\[
 \rho=2^{-33},
 \qquad
 X=\{z\in\R^P:\norm{z-c}_\infty\le\rho\}.
\]
The minimum difference between two centre coordinates in the same
coordinate direction is
\begin{equation}\label{eq:gap}
 g=\min_{r}\min_{i<j}\abs{c_{ir}-c_{jr}}
   =\frac{215220270}{2^{50}}>2\rho.
\end{equation}
Each difference $x_r^{(i)}-x_r^{(j)}$ changes by at most $2\rho$ on
$X$, so its sign is fixed there. Write this sign as $s_{ijr}$. On $X$,
equation \eqref{eq:Fabs} is therefore the rational polynomial system
\begin{equation}\label{eq:Fpoly}
 F_{ij}(z)=\sum_{r=1}^{56}s_{ijr}
 \bigl(x_r^{(i)}-x_r^{(j)}\bigr)^5-D.
\end{equation}
In particular, $F$ is differentiable on a neighbourhood of $X$.

For a selected coordinate $x_{ir}$, the only nonzero entries in its
Jacobian column occur in pair rows containing $i$. In the row $(i,j)$,
\begin{equation}\label{eq:jacobian}
 \frac{\partial F_{ij}}{\partial x_{ir}}
 =5s_{ijr}(x_{ir}-x_{jr})^4,
 \qquad
 \frac{\partial F_{ij}}{\partial x_{jr}}
 =-5s_{ijr}(x_{ir}-x_{jr})^4,
\end{equation}
when the corresponding coordinate is selected, and
$\partial F_{ij}/\partial D=-1$.

\section{A contraction lemma}\label{sec:contraction}

For matrices, $\norm{\cdot}_\infty$ is the maximum absolute row sum.

\begin{lemma}\label{lem:contraction}
Let $X=\{z\in\R^P:\norm{z-c}_\infty\le\rho\}$, let $F$ be continuously
differentiable on a neighbourhood of $X$, with Jacobian $J$, and let
$B\in\R^{P\times P}$. Suppose
\[
 \norm{BF(c)}_\infty\le\eta,
 \qquad
 \sup_{z\in X}\norm{I-BJ(z)}_\infty\le q<1,
 \qquad
 \eta+q\rho<\rho.
\]
Then $F$ has exactly one zero $z_*$ in $X$. Moreover,
$\norm{z_*-c}_\infty\le\eta+q\rho$, and both $B$ and $J(z_*)$ are
invertible.
\end{lemma}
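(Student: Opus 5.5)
We apply the Banach fixed point theorem to the Newton-type map $T(z)=z-BF(z)$ on the box $X$. The plan is to show that $T$ maps $X$ into itself and is a contraction in $\norm{\cdot}_\infty$. We then translate the fixed point back into a zero of $F$, which requires $B$ to be invertible.

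First, invertibility. Fix any $z\in X$. Since $\norm{I-BJ(z)}_\infty\le q<1$, the Neumann series shows that $BJ(z)$ is invertible. Hence both $B$ and $J(z)$ are invertible, because $\det B\det J(z)\neq0$. In particular $B$ is invertible, so $F(z)=0$ if and only if $T(z)=z$. Applying this at $z=z_*$ will also give the invertibility of $J(z_*)$ once $z_*$ is found.

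Second, the contraction estimate. $X$ is convex and $F$ is $C^1$ on a neighbourhood of it. For $y,z\in X$ we write
\[
T(y)-T(z)=\int_0^1\bigl(I-BJ(z+t(y-z))\bigr)(y-z)\,dt.
\]
Each point $z+t(y-z)$ lies in $X$, so $\norm{T(y)-T(z)}_\infty\le q\norm{y-z}_\infty$, using the induced matrix norm.

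Third, self-mapping. For $z\in X$ we have
\[
\norm{T(z)-c}_\infty\le\norm{T(z)-T(c)}_\infty+\norm{T(c)-c}_\infty\le q\rho+\eta,
\]
since $T(c)-c=-BF(c)$. The hypothesis $\eta+q\rho<\rho$ then gives $T(X)\subseteq X$.

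$X$ is a closed subset of $\R^P$, hence complete. Banach's theorem therefore yields a unique fixed point $z_*\in X$. By the first step, this is exactly the unique zero of $F$ in $X$. Applying the self-mapping bound to $z=z_*$ gives $\norm{z_*-c}_\infty=\norm{T(z_*)-c}_\infty\le\eta+q\rho$.

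The only point needing care is the mean value step for vector-valued maps. We use the integral form of the mean value theorem, not the pointwise form, which fails for vector-valued maps. This needs continuity of $J$ along segments, which the $C^1$ hypothesis supplies. Otherwise the argument is routine.
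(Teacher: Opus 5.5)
Your proposal is correct and follows the paper's proof essentially step for step. Both use the same map $T(z)=z-BF(z)$, the integral mean value bound for the contraction, the self-mapping estimate through $T(c)$, and the Neumann series on $BJ(z)$ to get invertibility of $B$ and $J(z)$, so that fixed points of $T$ are exactly the zeros of $F$. The only difference is the order of the steps: you establish invertibility of $B$ first, while the paper does so after obtaining the fixed point.
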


\begin{proof}
Set $T(z)=z-BF(z)$. For $z,w\in X$, the line segment between them lies
in $X$, and
\[
 T(z)-T(w)=\int_0^1
 \bigl[I-BJ(w+t(z-w))\bigr](z-w)\,dt.
\]
Hence $\norm{T(z)-T(w)}_\infty\le q\norm{z-w}_\infty$. Also
\[
 \norm{T(z)-c}_\infty
 \le q\norm{z-c}_\infty+\norm{BF(c)}_\infty
 \le q\rho+\eta<\rho,
\]
so $T$ maps the complete metric space $X$ into itself and is a
contraction. It has a unique fixed point $z_*$, and
$\norm{z_*-c}_\infty\le\eta+q\rho$.

For every $z\in X$, the inequality $\norm{I-BJ(z)}_\infty<1$ makes
$BJ(z)$ invertible by a Neumann series. Since the matrices are square,
$B$ and $J(z)$ are invertible. Thus $BF(z_*)=0$ implies $F(z_*)=0$.
Every zero of $F$ is a fixed point of $T$, proving uniqueness.
\end{proof}

\section{The certificate}\label{sec:certificate}

Put
\[
 Q=2^{50},\qquad R=2^{17},\qquad \rho=R/Q.
\]
For the lexicographically ordered pair $k=(i,j)$, define
\begin{equation}\label{eq:fhat}
 f_k=\sum_{r=1}^{56}\abs{K_{ir}-K_{jr}}^5-d_KQ^4.
\end{equation}
Then $F(c)=f/Q^5=f/2^{250}$.

Let $\widehat J\in\mathbb Z^{P\times P}$ be the numerator matrix for
$J(c)=\widehat J/Q^4$. Formula \eqref{eq:jacobian} determines its
coordinate columns: if the selected variable is $x_{ir}$ and the row is
$(i,j)$, its entry is
\[
 5\,\operatorname{sgn}(K_{ir}-K_{jr})
 \abs{K_{ir}-K_{jr}}^4,
\]
with the opposite sign for the $x_{jr}$ column. All other coordinate
entries are zero, and the $D$ column consists of $-Q^4$.

For a pair row $k=(i,j)$ and a coordinate direction $r$, let
$w\in\{0,1,2\}$ be
the number of selected positions among $(i,r)$ and $(j,r)$. Each
Jacobian entry arising from a selected endpoint has numerator radius
\begin{equation}\label{eq:Ehat}
 5\left[(\abs{K_{ir}-K_{jr}}+wR)^4
       -\abs{K_{ir}-K_{jr}}^4\right].
\end{equation}
Placing these radii in the corresponding positions defines a
nonnegative integer matrix $\widehat E$ such that, entrywise,
\begin{equation}\label{eq:Jbound}
 \abs{J(z)-J(c)}\le \widehat E/Q^4
 \qquad (z\in X).
\end{equation}
Indeed, \eqref{eq:gap} keeps the interval away from zero, and
$t\mapsto t^4$ is increasing and convex for $t\ge0$, so the largest
deviation occurs at the upper endpoint used in \eqref{eq:Ehat}.

Define
\begin{align}
 f_0&=\max_k\abs{f_k},\label{eq:f0}\\
 \eta_0&=\max_a\abs{(B_0f)_a},\label{eq:eta0}\\
 q_0&=\max_a\sum_b\left(
 \abs{2^{240}\delta_{ab}-(B_0\widehat J)_{ab}}
 +(\abs{B_0}\widehat E)_{ab}\right).\label{eq:q0}
\end{align}
Here $\abs{B_0}$ denotes entrywise absolute value. These definitions give
\begin{equation}\label{eq:scaled}
 \norm{F(c)}_\infty=\frac{f_0}{2^{250}},
 \qquad
 \eta:=\norm{BF(c)}_\infty=\frac{\eta_0}{2^{290}},
 \qquad
 q:=\frac{q_0}{2^{240}}.
\end{equation}
Moreover, $\sup_{z\in X}\norm{I-BJ(z)}_\infty\le q$. This follows from
\eqref{eq:Jbound} and
$\abs{B(J(z)-J(c))}\le\abs{B}\widehat E/Q^4$ entrywise.

\begin{proposition}\label{prop:certificate}
The archived integer arrays satisfy the following statements.
\begin{enumerate}
\item The array shapes are
$K:(58,56)$, $\mathrm{dK}:(1,)$, $\mathrm{sel}:(1653)$ and
$B_0:(1653,1653)$; the last entry of $\mathrm{sel}$ is $3248$,
its preceding entries encode
distinct valid coordinate positions, and the pair list contains every
$1\le i<j\le58$ exactly once.
\item The minimum gap is the value in \eqref{eq:gap}, and hence every
coordinate difference has fixed nonzero sign on $X$.
\item Every $z\in X$ satisfies $D>1-2^{-32}>0$.
\item The integers in \eqref{eq:f0}--\eqref{eq:q0} satisfy
\begin{equation}\label{eq:integerchecks}
 f_0<2^{203},\qquad
 \eta_0<2^{247},\qquad
 q_0<2^{226},\qquad
 \eta_0+2^{17}q_0<2^{247}.
\end{equation}
\end{enumerate}
Consequently
\begin{equation}\label{eq:headlines}
 \norm{F(c)}_\infty<2^{-47},\qquad
 \eta<2^{-43},\qquad
 q<2^{-14},\qquad
 \eta+q\rho<2^{-10}\rho<\rho.
\end{equation}
\end{proposition}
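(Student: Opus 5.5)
This proposition is a finite computational statement. Its proof splits into two parts: an exact integer verification of items (1)--(4), carried out by the archived code, and a short deduction of \eqref{eq:headlines} from \eqref{eq:integerchecks} and \eqref{eq:scaled}.

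For the computational part, I would load $K$, $d_K$, $\mathrm{sel}$ and $B_0$ and check the shapes directly. I would then decode each entry $s<3248$ of $\mathrm{sel}$ as $(i,r)=(\lfloor s/56\rfloor+1,\,s\bmod 56+1)$, and check that the entries are distinct and that the final entry is $3248$. The pair list is generated lexicographically, so it contains each $i<j$ once.

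For item (2), I would compute $\min_r\min_{i<j}\abs{K_{ir}-K_{jr}}$ in integers, compare it with $215220270$, and note that $215220270>2\cdot2^{17}$. Hence each difference, which moves by at most $2R$, keeps its sign.

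For item (3), I would check $d_K-2^{17}>2^{50}-2^{18}$, that is, $D_c-\rho>1-2^{-32}$.

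For item (4), all arithmetic is done in arbitrary-precision integers. I would form $f$ via \eqref{eq:fhat}, then $\widehat J$ and $\widehat E$ from the displayed formulas, where $w$ counts the selected endpoints. I would then compute $B_0f$, $B_0\widehat J$ and $\abs{B_0}\widehat E$ exactly, and compare the maxima with the stated powers of two.

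The deduction is arithmetic on exponents. From \eqref{eq:scaled}, $\norm{F(c)}_\infty=f_0/2^{250}<2^{-47}$, $\eta=\eta_0/2^{290}<2^{-43}$ and $q=q_0/2^{240}<2^{-14}$. For the last headline, $\rho=2^{-33}$ gives
\[
\eta+q\rho=\frac{\eta_0+2^{17}q_0}{2^{290}}<2^{-43}=2^{-10}\rho.
\]
The identities in \eqref{eq:scaled} need a check of scaling. We have $B=B_0/2^{40}$ and $F(c)=f/2^{250}$, which gives $2^{290}$. Also $J(c)=\widehat J/2^{200}$, so $BJ(c)=B_0\widehat J/2^{240}$, and $I$ corresponds to $2^{240}\delta_{ab}$. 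Likewise $\abs{B}\widehat E/Q^4=\abs{B_0}\widehat E/2^{240}$.

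The bound $\sup_X\norm{I-BJ(z)}_\infty\le q$ follows from the triangle inequality, entrywise:
\[
\abs{I-BJ(z)}\le\abs{I-BJ(c)}+\abs{B}\,\abs{J(z)-J(c)}.
\]
We then apply \eqref{eq:Jbound}.

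The main obstacle is the soundness of \eqref{eq:Jbound} rather than the computation itself. For a selected coordinate $x_{ir}$ in row $(i,j)$, the difference $x_{ir}-x_{jr}$ moves from its centre value by at most $wR/Q$ in absolute value. By item (2), the signed difference keeps the sign of $K_{ir}-K_{jr}$. So $\abs{x_{ir}-x_{jr}}$ lies in $[t-wR/Q,\,t+wR/Q]$, where $t=\abs{K_{ir}-K_{jr}}/Q$.

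By convexity of $u\mapsto u^4$ on $u\ge0$, the deviation of $u^4$ from $t^4$ is largest at the upper endpoint. This is because $(t+h)^4-t^4\ge t^4-(t-h)^4$ for $0\le h\le t$. This gives \eqref{eq:Ehat} after multiplying by $5$ and the factor $Q^4$.

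The $D$ column is constant and has zero radius. The remaining cost is computational: a $1653\times1653$ matrix product with $\sim 2^{200}$-sized entries. This is feasible exactly, but it is the step where I would spend the most effort.
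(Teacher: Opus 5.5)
Your proposal is correct and follows the paper's proof essentially step for step: items (1)--(4) are reduced to exact integer comparisons on the archived arrays, and \eqref{eq:headlines} is obtained by dividing by the denominators in \eqref{eq:scaled}, including the identity $\eta+q\rho=(\eta_0+2^{17}q_0)/2^{290}$. Your additional checks of the scaling identities and of the monotonicity/convexity argument behind \eqref{eq:Jbound} match what the paper establishes in the text just before the proposition.
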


\begin{proof}
All assertions reduce to integer comparisons.

For (1), direct inspection verifies the four array shapes and integer types, the
final marker, and the range and uniqueness of the preceding entries of
$\mathrm{sel}$. The pair list is generated by the nested order
$i=1,\ldots,58$ and $j=i+1,\ldots,58$.

For (2), $\min_r\min_{i<j}\abs{K_{ir}-K_{jr}}=215220270$.
Since a difference moves by at most $2R$ over the common denominator
$Q$, comparison with $2R$ proves sign constancy. For the third,
$d_K-R>Q-2^{18}$ is exactly the displayed lower bound on $D$.

Finally, $f$ is formed by \eqref{eq:fhat}, $\widehat J$ by
\eqref{eq:jacobian}, and $\widehat E$ by \eqref{eq:Ehat}. Integer matrix
multiplication then forms \eqref{eq:eta0} and \eqref{eq:q0}. Direct
comparison gives the four inequalities in \eqref{eq:integerchecks}.
After division by the denominators in \eqref{eq:scaled}, the first three
give the first three bounds in \eqref{eq:headlines}. Since
$\rho=2^{-33}$, the last comparison is precisely
$\eta+q\rho<2^{-43}=2^{-10}\rho$.
\end{proof}

\begin{proof}[Proof of Theorem~\ref{thm:main}]
By Proposition~\ref{prop:certificate}, $F$ is the polynomial system
\eqref{eq:Fpoly} throughout $X$, and the hypotheses of
Lemma~\ref{lem:contraction} hold. Hence $F$ has a unique zero $z_*$ in
$X$. Its distance coordinate $D_*$ is positive. Therefore
\[
 \sum_{r=1}^{56}\abs{x_r^{(i)}-x_r^{(j)}}^5=D_*>0
 \qquad (1\le i<j\le58).
\]
The corresponding $58$ points are distinct and all their $\ell_5$
distances equal $D_*^{1/5}$. Thus $e(\ell_5^{56})\ge58$.
\end{proof}

The configuration is determined by $z_*$, not by the rounded centre
$c$. Since $J(z_*)$ is invertible, $z_*$ is an isolated complex zero of
the rational polynomial system. By elimination, its coordinates are real
algebraic; the system and box represent them exactly.

\section{Consequences}\label{sec:consequences}

\begin{corollary}\label{cor:family}
The configuration in Theorem~\ref{thm:main} belongs to a real-analytic
local family of dimension $1596$ of equilateral $58$-point
configurations in $\ell_5^{56}$.
\end{corollary}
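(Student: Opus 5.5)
The plan is to apply the implicit function theorem, with the $1596$ fixed coordinates as parameters. Write the full configuration space as $\R^{3248}\times\R$, with the $3248$ coordinates $x_{ir}$ together with $D$. Split the variables into the $1596$ unselected coordinates $u\in\R^{1596}$, currently frozen at $K_{ir}/2^{50}$, and the selected variables $z\in\R^P$ as in Section~\ref{sec:configuration}. Let $G(z,u)$ denote the map \eqref{eq:Fabs} with $u$ free. Then $F(z)=G(z,u_0)$, where $u_0$ is the vector of dyadic values. At the point $(z_*,u_0)$ supplied by Theorem~\ref{thm:main}, every coordinate difference satisfies $\abs{x^{(i)}_r-x^{(j)}_r}\ge g-2\rho>0$ by \eqref{eq:gap}. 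This lower bound persists on a neighbourhood $U$ of $(z_*,u_0)$ in $\R^{P}\times\R^{1596}$. On $U$ the signs $s_{ijr}$ are therefore constant and $G$ is a polynomial map, hence real-analytic.

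Next I will invoke the invertibility conclusion of Lemma~\ref{lem:contraction}. The partial Jacobian $\partial G/\partial z$ at $(z_*,u_0)$ is exactly $J(z_*)$, and the lemma, applied through Proposition~\ref{prop:certificate}, gives that $J(z_*)$ is invertible. The analytic implicit function theorem then yields a neighbourhood $V$ of $u_0$ in $\R^{1596}$ and a real-analytic map $\zeta:V\to\R^P$ with $\zeta(u_0)=z_*$ and $G(\zeta(u),u)=0$. Shrinking $V$, I can also keep $D(\zeta(u))>0$, since $D_*>1-2^{-32}$. Each $u\in V$ then gives $58$ points whose pairwise fifth-power distances all equal $D(\zeta(u))>0$. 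These points are distinct, and they form an equilateral set in $\ell_5^{56}$.

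It remains to see that this family is genuinely $1596$-dimensional, meaning the parametrisation $u\mapsto(\zeta(u),u)\in\R^{3248}\times\R$ is an analytic embedding. This is immediate: the $u$-block of the parametrisation is the identity, so it is an injective immersion, and on $V$ it is a homeomorphism onto its image. By uniqueness in the implicit function theorem, the image is also locally the whole zero set of $G$ near $(z_*,u_0)$. So the family is a $1596$-dimensional real-analytic submanifold, the local solution set of the equilateral equations.

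The only step requiring care is the first one, confirming that the absolute values do not spoil analyticity when $u$ varies. This holds because the sign constancy from Proposition~\ref{prop:certificate}(2) is an open condition and extends to a neighbourhood of $(z_*,u_0)$. I would also remark that the count $1596=3249-1653$ is the expected dimension, and that the family contains the obvious symmetries (translations, scaling, coordinate sign changes of the whole configuration), though these are not needed for the proof.
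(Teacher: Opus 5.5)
Your proposal is correct and matches the paper's proof: you treat all $3248$ coordinates and $D$ as variables, use the gap \eqref{eq:gap} to fix every sign (so the map is analytic) near $(z_*,u_0)$, and apply the analytic implicit function theorem with the invertible block $J(z_*)$ from Lemma~\ref{lem:contraction} to solve for the $1653$ selected variables in terms of the remaining $1596$, shrinking the neighbourhood to keep $D>0$. One small slip in your closing aside: coordinate sign changes are discrete symmetries that generally move the configuration far away, so they do not lie in the local family (translations and dilations do, as in the paper's subsequent remark), though this does not affect the proof.
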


\begin{proof}
Let $\mathcal F:\R^{3249}\to\R^{1653}$ be the equidistance map in which
all $3248$ coordinate entries and the common fifth-power distance are
variables. Near the certified configuration every coordinate difference
has fixed sign, so $\mathcal F$ is real analytic. Its derivative with
respect to the $1652$ selected coordinates and the distance variable is
$J(z_*)$, which is invertible by Lemma~\ref{lem:contraction}. The implicit
function theorem therefore expresses these $1653$ variables locally as
real-analytic functions of the remaining $1596$ coordinates. Positivity
of the common distance persists after restricting the neighbourhood.
\end{proof}

The orbit under translations and dilations has dimension at most $57$;
hence the family contains configurations not related to the certified
one by translation and dilation.

\begin{proof}[Proof of Corollary~\ref{cor:interval}]
For $p$ near $5$, define
\[
 F_{ij}(p,z)=\sum_{r=1}^{56}
 \exp\!\left(p\log\bigl[s_{ijr}(x_r^{(i)}-x_r^{(j)})\bigr]\right)-D.
\]
The quantity inside each logarithm is positive on $X$ by
\eqref{eq:gap}. Thus $F(p,z)$ is real analytic near $(5,z_*)$, and
$F(5,z_*)=0$. The proof of Theorem~\ref{thm:main} gives
$\norm{I-BD_zF(5,z_*)}_\infty<1$, so $D_zF(5,z_*)$ is invertible. The
implicit function theorem supplies an analytic solution $z(p)$ for all
$p$ in an open interval containing $5$. By continuity, after shrinking
the interval if necessary, $z(p)$ remains inside $X$ and its
$D$-coordinate remains positive. It therefore defines $58$ distinct
equilateral points in $\ell_p^{56}$.

It follows that $p_0<5$. The theorem of Ge, Xu and Zhou
\cite{GeXuZhou2026} gives $p_0\ge4$, and hence $p_0\in[4,5)$.
\end{proof}

At $p=1$, the signed standard basis gives $e(\ell_1^n)\ge2n$, equality
is known for $n\le4$, and $e(\ell_1^n)\le Cn\log n$
\cite{AlonPudlak2003,GeXuZhou2026}. Whether equality holds for all $n$
remains open: either $e(\ell_1^n)>2n$ in some dimension, or the upper
bound can be sharpened from $Cn\log n$ to $2n$.

Is $p_0=4$? Equivalently, do counterexamples occur at exponents
arbitrarily close to $4$ from above?

\section*{Competing interests and funding}

The author reports no competing interests. This research did not receive
any specific grant from funding agencies in the public, commercial or
not-for-profit sectors.

\section*{Data and code availability}

The certificate data, exact integer-arithmetic verifier, explanatory
notebooks, and instructions for reproducing the verification are openly
available at \url{https://doi.org/10.5281/zenodo.21911503}.

\section*{Disclosure of AI use}

OpenAI's GPT-5.6 Sol assisted in implementing the computational search
strategy and drafting this manuscript. The author takes full
responsibility for the content of the article.

\end{document}